\documentclass[preprint,12pt]{elsarticle}

\usepackage[T1]{fontenc}
\usepackage[utf8]{inputenc}
\usepackage{amsmath,amssymb,amsthm,mathtools}
\usepackage{enumitem}
\usepackage{microtype}
\usepackage[colorlinks=true,linkcolor=blue,citecolor=blue,urlcolor=blue]{hyperref}
\hypersetup{%
  pdftitle={Diameter-Ramsey triangles below the 135-degree threshold},%
  pdfauthor={Yaping Mao}%
}

\theoremstyle{plain}
\newtheorem{theorem}{Theorem}[section]
\newtheorem{lemma}[theorem]{Lemma}
\newtheorem{corollary}[theorem]{Corollary}
\theoremstyle{definition}
\newtheorem{definition}[theorem]{Definition}
\theoremstyle{remark}
\newtheorem{remark}[theorem]{Remark}

\DeclareMathOperator{\diam}{diam}
\DeclareMathOperator{\crad}{cr}
\newcommand{\R}{\mathbb{R}}

\newcommand{\calT}{\mathcal{T}}

\begin{document}

\begin{frontmatter}

\title{Diameter-Ramsey triangles below the \texorpdfstring{$135^\circ$}{135-degree} threshold}

\author[aff1]{Yaping Mao\corref{cor1}\fnref{fn1}}
\cortext[cor1]{Corresponding author.}
\fntext[fn1]{Supported by the National Science Foundation of China (Nos.\ 12471329 and 12061059).}
\ead{yapingmao@outlook.com; myp@qhnu.edu.cn}

\address[aff1]{Academy of Plateau Science and Sustainability, and School of Mathematics and Statistics, Qinghai Normal University, Xining, Qinghai 810008, China}

\begin{abstract}
A finite Euclidean set is diameter-Ramsey if, for every number of colors, some finite same-diameter witness has the property that every coloring of the witness contains a monochromatic congruent copy of the set. Frankl, Pach, Reiher and R\"odl asked whether any obtuse triangle is diameter-Ramsey. We prove the stronger statement that every non-degenerate triangle whose largest angle is strictly smaller than $135^\circ$ is diameter-Ramsey. Together with the theorem of Corsten and Frankl that triangles with an angle larger than $135^\circ$ are not diameter-Ramsey, this gives the sharp classification for the two open angular ranges on either side of $135^\circ$. The proof uses a weighted $k$-subset configuration with non-negative coefficients; a finite binary-tree construction realizes the required two prescribed overlaps, and the ordinary hypergraph Ramsey theorem then forces a monochromatic copy of the triangle.
\end{abstract}

\begin{keyword}
Euclidean Ramsey theory \sep Diameter-Ramsey set \sep Obtuse triangle \sep Hypergraph Ramsey theorem \sep Finite Euclidean configuration
\MSC[2020] 05D10 \sep 52C10 \sep 51M20
\end{keyword}

\end{frontmatter}

\section{Introduction}

Euclidean Ramsey theory, initiated by Erd\H{o}s, Graham, Montgomery, Rothschild, Spencer and Straus \cite{ErdosEtAl1973}, asks which finite Euclidean configurations are forced to occur monochromatically under arbitrary finite colorings of sufficiently rich Euclidean sets or spaces.  In the finite-witness form used here, a finite set $P$ in a Euclidean space is called \emph{Ramsey} if, for every integer $r\ge2$, there is a finite Euclidean set $R$ such that every $r$-coloring of the points of $R$ contains a monochromatic subset congruent to $P$.  A classical theorem of Frankl and R\"odl states that every triangle is Ramsey \cite{FranklRodl1986}; later they proved more generally that every non-degenerate simplex is Ramsey \cite{FranklRodl1990}.

Frankl, Pach, Reiher and R\"odl introduced the following same-diameter strengthening in their work on Borsuk and Ramsey type questions \cite{FranklPachReiherRodl2018}. For a finite Euclidean set $P$, write
\[
   \diam(P)=\max\{\|x-y\|:x,y\in P\}.
\]

\begin{definition}[Diameter-Ramsey set]\label{def:diameter-ramsey}
A finite set $P$ in a Euclidean space is \emph{diameter-Ramsey} if, for every integer $r\ge2$, there exist an integer $D$ and a finite set $R\subset\R^D$ such that
\[
   \diam(R)=\diam(P)
\]
and every $r$-coloring of the points of $R$ contains a monochromatic subset congruent to $P$.
\end{definition}

Every diameter-Ramsey set is Ramsey, but the converse is false.  For triangles, Frankl, Pach, Reiher and R\"odl proved that all acute and right triangles are diameter-Ramsey, while no triangle with an angle larger than $150^\circ$ is diameter-Ramsey \cite{FranklPachReiherRodl2018}.  They asked whether there exists an obtuse triangle that is diameter-Ramsey.  Corsten and Frankl subsequently proved that every finite set of diameter $1$ whose circumradius is strictly larger than $1/\sqrt2$ is not diameter-Ramsey; in particular, every triangle with an angle larger than $135^\circ$ is not diameter-Ramsey \cite{CorstenFrankl2018}.

The purpose of this note is to close the positive side up to the same threshold.

\begin{theorem}\label{thm:main}
Let $T$ be a non-degenerate triangle.  If the largest angle of $T$ is strictly smaller than $135^\circ$, then $T$ is diameter-Ramsey.
\end{theorem}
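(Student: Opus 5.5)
We will embed everything in a space of weighted $k$-subsets and let the hypergraph Ramsey theorem produce the monochromatic triangle. Fix $r$ and large integers $n\gg k$. For a $k$-subset $S\subseteq[n]$ we put $x_S=\sum_{i\in S} w_{\pi_S(i)}e_i\in\R^n$, where $w_1,\dots,w_k\ge0$ are weights still to be chosen and $\pi_S(i)$ is the rank of $i$ in $S$. The witness is $R=\{x_S\}$. Every point lies on the sphere of radius $\|w\|$. For two sets $S,S'$ we have $\|x_S-x_{S'}\|^2=2\|w\|^2-2\langle x_S,x_{S'}\rangle$. The inner product is a sum of products $w_aw_b$ over common elements, where $a,b$ are the ranks of a shared element in the two sets. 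Since all weights are non-negative, the distance is largest exactly when the overlap pattern contributes least to the inner product.

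The first step is to design the \emph{order type} of a triple $S_1<S_2<S_3$ of $k$-sets. This is a fixed interleaving pattern, chosen so that the three pairwise inner products $\langle x_{S_i},x_{S_j}\rangle$ are prescribed linear combinations of the products $w_aw_b$. By the Ramsey theorem for $3$-uniform hypergraphs, applied to the coloring of $\binom{[N]}{3}$ induced by "coloring the union pattern", any $r$-coloring of $R$ contains a monochromatic copy of any fixed finite pattern. Concretely, we color each $m$-subset $X\subseteq[N]$, with $m$ the size of the union of the pattern, by the triple of colors of the three sets placed into $X$ by the pattern. A monochromatic large set then gives a monochromatic triangle $x_{S_1},x_{S_2},x_{S_3}$.

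We then need three conditions:
\begin{enumerate}[label=(\roman*)]
\item the triangle spanned by the three points has the prescribed side lengths $a\le b\le c$;
\item the pattern's largest distance $c$ equals $\diam(R)$, meaning no two $k$-sets are farther apart than the pair realizing $c$;
\item the vertices lie on a sphere of radius $\|w\|$, so we need $c^2\le 2\|w\|^2$ by Corsten--Frankl-type considerations.
\end{enumerate}
The last condition is exactly where the threshold $135^\circ$ enters. A triangle with largest angle $\gamma$ and longest side $c$ has circumradius $c/(2\sin\gamma)$. The obstruction is circumradius $>c/\sqrt2$, and $\gamma<135^\circ$ is equivalent to $c/(2\sin\gamma)<c/\sqrt2$, with room to spare. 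For (ii) we will make the pair realizing $c$ disjoint, so its inner product is $0$; every other pair has non-negative inner product, and so $\diam(R)^2=2\|w\|^2=c^2$ automatically. The radius is therefore forced to be $c/\sqrt2$, which is larger than the circumradius of $T$. This is consistent, because the triangle's plane need not pass through the sphere's center.

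The remaining task, which is the main obstacle, is to realize the two other squared sides $a^2,b^2$. Equivalently, we must find a weight vector and an interleaving pattern in which $\langle x_{S_1},x_{S_2}\rangle=\|w\|^2-a^2/2$ and $\langle x_{S_2},x_{S_3}\rangle=\|w\|^2-b^2/2$ with $S_1\cap S_3=\emptyset$. Both targets are non-negative and at most $\|w\|^2$, and $\gamma<135^\circ$ should translate into a compatibility inequality between them. Put $\alpha=1-a^2/c^2$ and $\beta=1-b^2/c^2$, the required normalized overlaps. By the law of cosines, $a^2+b^2-2ab\cos\gamma=c^2$, and $\cos\gamma>-1/\sqrt2$ should be exactly what makes $\alpha,\beta$ jointly achievable with $S_1,S_3$ disjoint while $S_2$ overlaps both.

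My plan for this is a binary-tree construction. We take $k=2^h$ and index the weights by leaves of a tree. We let $S_2$ share one subtree's worth of ranks with $S_1$ and a disjoint subtree with $S_3$, but \emph{shifted}, so that the overlaps pair weight $w_a$ with $w_b$ for $a\ne b$. By splitting mass recursively down the tree, the achievable pairs of normalized overlaps $(\alpha,\beta)$ become dense in the region cut out by the $135^\circ$ condition. Non-degeneracy and strict inequality then let us hit $(\alpha,\beta)$ exactly after a final continuous adjustment of the weights. I expect verifying exactness, and keeping all weights non-negative at this step, to be the delicate part.
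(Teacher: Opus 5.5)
\textbf{There is a genuine gap: the step you flag as the main obstacle is the whole substance of the theorem, and you leave it as a hope.} Your framework does match the paper's: non-negative weighted $k$-subset vectors, a disjoint pair $S_1,S_3$ realizing the longest side so that $\diam(R)=\sqrt2\,\|w\|=c$, and a Ramsey argument on a fixed order type. Two things are missing.

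First, you never derive what $\gamma<135^\circ$ says about your normalized overlaps. The circumradius remark only shows the condition is necessary. The condition is $\alpha^2+\beta^2<1$. Since $a^2/c^2=1-\alpha$ and $b^2/c^2=1-\beta$, the cosine rule gives $\cos\gamma=(1-\alpha-\beta)/\bigl(2\sqrt{(1-\alpha)(1-\beta)}\bigr)$. For $\alpha+\beta>1$, the inequality $\cos\gamma>-1/\sqrt2$ is equivalent to $(\alpha+\beta-1)^2<2(1-\alpha)(1-\beta)$, i.e.\ to $\alpha^2+\beta^2<1$. For $\alpha+\beta\le1$, both conditions hold trivially.

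Second, and decisively, you need an exact realization lemma. For every $\alpha,\beta\ge0$ with $\alpha^2+\beta^2<1$, you must produce \emph{one} non-negative weight vector with $\|w\|^2=L$ and \emph{one} ordered pattern with $S_1\cap S_3=\emptyset$, $\langle x_{S_1},x_{S_2}\rangle=L\alpha$ and $\langle x_{S_2},x_{S_3}\rangle=L\beta$. ``Dense, then a final continuous adjustment'' does not prove this. Density of values over varying patterns gives nothing exact. Perturbing weights inside a fixed pattern only helps once you know that pattern's image contains the target, which is precisely what must be shown. Your leaf-indexed scheme with $k=2^h$ also never specifies the rank matching. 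That matching is forced to be an increasing partial map, because all three sets inherit one linear order, and this restricts which products $w_aw_b$ can occur.

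The missing idea, as the paper implements it, is to put weights on \emph{all} nodes of the binary tree of depth $M$, not just the leaves. The nodes are the words of length at most $M$, ordered by length and then lexicographically. Take root weight $\lambda$ and set $\omega_{u0}=s\,\omega_u$, $\omega_{u1}=t\,\omega_u$ with $s^2+t^2=\eta$. Then level $j$ carries mass $\lambda^2\eta^j$, and $\lambda^2=L(1-\eta)/(1-\eta^{M+1})$ makes the total equal to $L$.

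The middle set $S_2$ consists of one new element in the root position, together with the following for every internal node $u$:
\begin{itemize}
\item the element of $S_1$ sitting at position $u$, now placed at position $u0$;
\item the element of $S_3$ sitting at position $u$, now placed at position $u1$.
\end{itemize}
A suitable interleaving of the ground set makes all three induced orders level-lexicographic.

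This parent-to-child shift gives $\langle x_{S_1},x_{S_2}\rangle=s\lambda^2\sum_{j<M}\eta^j$ and $\langle x_{S_3},x_{S_2}\rangle=t\lambda^2\sum_{j<M}\eta^j$. The ratio is made exact by taking $(s,t)$ proportional to $(\alpha,\beta)$; the equilateral case $\alpha=\beta=0$ is trivial. The size is made exact by the intermediate value theorem for $f_M(\eta)=\sqrt\eta\,(1-\eta^M)/(1-\eta^{M+1})$. This function runs continuously from $0$ to $M/(M+1)$ on $[0,1]$, so it suffices to choose $M$ with $\sqrt{\alpha^2+\beta^2}<M/(M+1)$. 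No approximation is needed, and non-negativity is automatic.

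Separately, your Ramsey step does not work as written, and $3$-uniform Ramsey is the wrong tool. Colouring $m$-sets by the triple of colours of the three placed sets produces a large set on which that triple is constant. It does not produce one on which the three entries coincide. The clean version:
\begin{itemize}
\item colour each $k$-set $I$ by the colour of $x_I$;
\item take $n\ge R_k(m;r)$, with $m$ the size of the pattern's ground set;
\item get an $m$-set all of whose $k$-subsets share a colour;
\item transport the pattern into it by the order-preserving bijection.
\end{itemize}
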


Thus the original question has a strong affirmative answer: not only do obtuse diameter-Ramsey triangles exist, but all obtuse triangles with largest angle below $135^\circ$ are diameter-Ramsey.  Combining Theorem~\ref{thm:main} with the obstruction of Corsten and Frankl gives a sharp open-interval threshold at $135^\circ$; the exact endpoint is not addressed here.

We also obtain an immediate consequence for a conjecture of Corsten and Frankl.  They conjectured that a simplex is diameter-Ramsey if and only if its circumcenter belongs to its convex hull \cite[Conjecture 3.1]{CorstenFrankl2018}.  Since the circumcenter of an obtuse triangle lies outside the triangle, Theorem~\ref{thm:main} refutes this conjecture already for two-dimensional simplices.

The proof is elementary apart from the finite hypergraph Ramsey theorem.  The main construction is a family of weighted $k$-subset vectors
\[
   x_I=\sum_{j=1}^k\alpha_j e_{i_j},\qquad I=\{i_1<\cdots<i_k\},
\]
where all coefficients are non-negative.  Non-negativity keeps the diameter under control, while a finite binary-tree pattern realizes two prescribed overlap inner products.  A Ramsey argument then makes the three desired $k$-subsets monochromatic.

\section{Weighted subset configurations}

We write $[n]=\{1,\ldots,n\}$ and, for a finite set $X$, we write $\binom{X}{k}$ for the family of all $k$-element subsets of $X$.  For integers $k,m,r\ge1$, let $R_k(m;r)$ denote a $k$-uniform hypergraph Ramsey number: every $r$-coloring of $\binom{[N]}{k}$ with $N\ge R_k(m;r)$ contains an $m$-element set $S$ such that all members of $\binom{S}{k}$ have the same color.

Let $\Omega$ be a finite linearly ordered set and let
\[
   U=\{u_1<\cdots<u_k\},\qquad V=\{v_1<\cdots<v_k\}
\]
be $k$-subsets of $\Omega$.  For a coefficient vector $\alpha=(\alpha_1,\ldots,\alpha_k)$, define the \emph{ordered overlap form}
\begin{equation}\label{eq:mu}
   \mu_\alpha(U,V)
   =\sum_{\substack{1\le s,t\le k\\ u_s=v_t}}\alpha_s\alpha_t.
\end{equation}
This notation records which coefficient position a common element occupies in each ordered subset.

\begin{lemma}[Weighted $k$-subset model]\label{lem:model}
Let $\alpha_1,\ldots,\alpha_k\ge0$ and put
\[
   L=\sum_{j=1}^k\alpha_j^2>0.
\]
For $n\ge k$ and $I=\{i_1<\cdots<i_k\}\in\binom{[n]}{k}$, define
\[
   x_I=\sum_{j=1}^k\alpha_j e_{i_j}\in\R^n,
\]
where $e_1,\ldots,e_n$ are the standard unit vectors.  Then
\begin{equation}\label{eq:inner-product-model}
   \langle x_I,x_J\rangle=\mu_\alpha(I,J)
\end{equation}
for all $I,J\in\binom{[n]}{k}$, where $[n]$ has its natural order.  In particular, if $n\ge2k$, then
\[
   X_\alpha(n):=\{x_I:I\in\binom{[n]}{k}\}
\]
has diameter $\sqrt{2L}$.
\end{lemma}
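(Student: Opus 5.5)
The plan is to expand the inner product in the standard basis and then read off the squared distances. For the identity \eqref{eq:inner-product-model}, write $I=\{i_1<\cdots<i_k\}$ and $J=\{j_1<\cdots<j_k\}$. By bilinearity and $\langle e_a,e_b\rangle=\delta_{ab}$,
\[
   \langle x_I,x_J\rangle=\sum_{s,t=1}^k\alpha_s\alpha_t\langle e_{i_s},e_{j_t}\rangle=\sum_{\substack{1\le s,t\le k\\ i_s=j_t}}\alpha_s\alpha_t ,
\]
which is exactly $\mu_\alpha(I,J)$ from \eqref{eq:mu}. In particular $\|x_I\|^2=\mu_\alpha(I,I)$. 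Since the entries of $I$ are strictly increasing, $i_s=i_t$ forces $s=t$, so $\|x_I\|^2=\sum_s\alpha_s^2=L$.

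Next we bound the distances. For any $I,J$,
\[
   \|x_I-x_J\|^2=2L-2\mu_\alpha(I,J).
\]
Every coefficient $\alpha_j$ is non-negative, so each term $\alpha_s\alpha_t$ in $\mu_\alpha(I,J)$ is non-negative and $\mu_\alpha(I,J)\ge0$. Hence $\|x_I-x_J\|\le\sqrt{2L}$ for all pairs, which gives $\diam X_\alpha(n)\le\sqrt{2L}$. This is the one place where non-negativity is used.

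For the matching lower bound, use $n\ge2k$ to choose disjoint sets $I=\{1,\ldots,k\}$ and $J=\{k+1,\ldots,2k\}$. No index is shared, so the sum defining $\mu_\alpha(I,J)$ is empty and $\mu_\alpha(I,J)=0$. Then $\|x_I-x_J\|^2=2L$, so the diameter is exactly $\sqrt{2L}$.

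None of these steps is hard. The only points to state explicitly are the two facts used above: strict ordering within a single subset makes the diagonal overlap equal to $L$, and non-negativity of the $\alpha_j$ makes every overlap non-negative. The hypothesis $L>0$ only ensures that the configuration is not a single point, and it is not needed elsewhere in the argument.
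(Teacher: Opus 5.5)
Your proof is correct and follows the paper's argument essentially step for step: expand the inner product using orthonormality to get $\mu_\alpha(I,J)$ and $\|x_I\|^2=L$, use non-negativity of the $\alpha_j$ to get $\|x_I-x_J\|^2\le 2L$, and attain this bound with two disjoint $k$-subsets when $n\ge 2k$. Your side remark that $L>0$ is not really needed is also right, since for $L=0$ the set is $\{0\}$ and its diameter is $0=\sqrt{2L}$.
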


\begin{proof}
Equation \eqref{eq:inner-product-model} follows by expanding the inner product and using the orthonormality of the standard basis.  Each vector $x_I$ has squared norm $L$.  Since the coefficients are non-negative, all inner products are non-negative; hence
\[
   \|x_I-x_J\|^2=2L-2\langle x_I,x_J\rangle\le 2L.
\]
If $n\ge2k$, there are disjoint $k$-subsets $I,J\subset[n]$.  For them $\langle x_I,x_J\rangle=0$, so the upper bound $2L$ is attained.  Thus $\diam(X_\alpha(n))=\sqrt{2L}$.
\end{proof}

The next lemma is the main realization step.  It says that every point strictly inside a quarter disk can be represented by two ordered overlaps with one common coefficient vector.

\begin{lemma}[Finite binary-tree representation]\label{lem:tree}
Let $L>0$ and let $p,q\ge0$ satisfy
\[
   p^2+q^2<L^2.
\]
Then there exist an integer $k\ge1$, non-negative coefficients $\alpha_1,\ldots,\alpha_k$ satisfying
\[
   \sum_{j=1}^k\alpha_j^2=L,
\]
a finite linearly ordered set $\Omega$, and three $k$-subsets $A,B,C\in\binom{\Omega}{k}$ such that
\[
   \mu_\alpha(A,B)=0,
   \qquad
   \mu_\alpha(A,C)=p,
   \qquad
   \mu_\alpha(B,C)=q.
\]
\end{lemma}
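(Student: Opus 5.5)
The plan is to read $\mu_\alpha$ as a Cauchy--Schwarz pairing and to build $\alpha$ so that this pairing is nearly tight. Since $A$ and $B$ must be disjoint, the positions of $C$ split into three parts: positions matched to elements of $A$ (set $P_A$), positions matched to elements of $B$ (set $P_B$), and fresh positions. Each matching must be order-preserving for $\mu_\alpha$ to be realizable, but $\Omega$ is ours to choose, so any pair of monotone partial injections $P_A\to[k]$ and $P_B\to[k]$ can be realized: we simply interleave the elements of $A$, $B$ and $C$ accordingly. By Cauchy--Schwarz, $p=\mu_\alpha(A,C)\le\sqrt{L}\,\bigl(\sum_{t\in P_A}\alpha_t^2\bigr)^{1/2}$, and similarly for $q$, so $p^2+q^2\le L^2$. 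Equality would require the coefficients of $C$ on $P_A$ to be a scaled copy of the whole vector $\alpha$, and likewise on $P_B$. This self-similarity is exactly a binary-tree (fractal) weighting, and truncating it at finite depth costs a little, which the strict inequality $p^2+q^2<L^2$ is there to absorb.

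\textbf{Construction.} Write $(p,q)=\rho L(\cos\theta,\sin\theta)$ with $\rho<1$; the degenerate cases $p=0$ or $q=0$ are handled by the same scheme with one branch empty. Take a finite full binary tree $\mathcal T$ whose leaves are listed in left-to-right order, and give a leaf the weight $\alpha_\ell^2=L\prod(\text{factors along its path})$, using factor $\cos^2\theta$ for a left edge and $\sin^2\theta$ for a right edge. The position set of $C$ is the list of leaves. A leaf in the left subtree whose path, with its first edge removed, is the path of a leaf $\ell'$ of $\mathcal T$ gets matched to position $\ell'$ of $A$. In that case $\alpha_\ell=\cos\theta\,\alpha_{\ell'}$, and the contribution is $\cos\theta\,\alpha_{\ell'}^2$. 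The right subtree is treated symmetrically with $B$ and $\sin\theta$. These matchings are monotone because removing the first edge preserves lexicographic order.

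\textbf{Truncation.} The tree cannot be exactly self-similar when finite: removing the first edge maps $\mathcal T$ into itself only if $\mathcal T$ is an infinite complete tree. So I take $\mathcal T$ to be the complete tree of depth $d$ and match only those leaves of depth $d$ whose shortened paths are internal nodes at depth $d-1$. Equivalently, I take $A$'s relevant leaves at depth $d-1$ and split each coefficient, replacing a coefficient by two entries whose squares sum to the original, which preserves $L$. The unmatched positions carry total weight $O(L\,\max(\cos^2\theta,\sin^2\theta)^{d})$. Hence the achieved pair $(p',q')$ tends to $L(\cos\theta,\sin\theta)$ as $d\to\infty$. Choosing $d$ large and rescaling the angle, I can get $p'\ge p$ and $q'\ge q$ while staying within a controllable error.

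\textbf{Main obstacle: exact values.} The step I expect to be hardest is hitting $p$ and $q$ exactly rather than approximately. Removing some matched pairs lets me decrease $p'$ and $q'$ only in discrete steps. I would fix this by deliberately leaving a small slack in $L$, namely $\rho<1$, and spending it on a gadget: two extra coefficient positions $x,y\ge0$ with $x^2+y^2$ equal to the leftover norm. In the gadget, position $x$ of $C$ is matched to position $y$ of $A$ (or of $B$), which contributes $xy$. This quantity varies continuously over $[0,(x^2+y^2)/2]$, so it can fill the remaining deficit in $p$ exactly, and a second gadget handles $q$. Making this work requires choosing the main tree to undershoot, $p'\le p$ and $q'\le q$, with deficits small compared with the reserved norm. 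Strictness of $p^2+q^2<L^2$ is what makes such a choice possible. I would also check that the gadget positions can be placed at the end of the order without breaking monotonicity of the existing matchings.
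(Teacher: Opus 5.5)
There is a genuine gap in the truncation step, and that step carries the entire content of the lemma. Your Cauchy--Schwarz heuristic is the right intuition. You are also right that any pair of monotone partial matchings can be realized by ordering $\Omega$ suitably.

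\textbf{Why the truncation fails.} In the complete tree of depth $d$ every leaf has depth $d$. So for a leaf $0w$ of $C$, the shortened path $w$ has depth $d-1$ and is not a position of $A$; nothing gets matched. Your ``equivalent'' version compares $C$'s depth-$d$ leaf vector with a depth-$(d-1)$ vector for $A$. That would give exactly $(L\cos\theta,L\sin\theta)$, but the lemma requires one common vector $\alpha$ for $A$, $B$, $C$.

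Once you split $A$'s coefficient $\alpha_w$ into $\alpha_w\cos\theta$ and $\alpha_w\sin\theta$, position $0w$ of $C$ can be matched to only one of these entries. With the natural matching $0w\mapsto w0$ the overlap drops to $\sum_{|w|=d-1}\alpha_w^2\cos^2\theta=L\cos^2\theta$, and likewise $q'=L\sin^2\theta$. That point lies on the line $p+q=L$, i.e.\ right triangles, which are already known.

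At $\theta=45^\circ$ no matching can do better. All $2^d$ leaf weights equal $L2^{-d}$, and each position of $C$ is shared with at most one of the disjoint sets $A$, $B$. Hence $p'+q'\le L$, and the obtuse region $p+q>L$ is never reached. So the claimed error $O(L\max(\cos^2\theta,\sin^2\theta)^d)$ has no basis. The root cause is that $\cos^2\theta+\sin^2\theta=1$ keeps the weight per level constant. There is therefore no finite-norm self-similar object for a finite truncation to approximate.

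\textbf{The missing idea.} The paper puts coefficients on \emph{all} nodes of the depth-$M$ tree, not only the leaves. Nodes are listed by level and then lexicographically, with $\alpha_{w0}=s\alpha_w$, $\alpha_{w1}=t\alpha_w$ and $s^2+t^2=\eta<1$. Level masses then decay geometrically, and the root weight is normalized so that $\sum_w\alpha_w^2=L$.

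The set $C$ puts a fresh element $z$ in the root position. For $|w|<M$ it puts $a_w$ in position $w0$ and $b_w$ in position $w1$; appending a bit is monotone in this order. This gives $\mu_\alpha(A,C)=s\sum_{|w|<M}\alpha_w^2$ and $\mu_\alpha(B,C)=t\sum_{|w|<M}\alpha_w^2$, so only the bottom level is lost.

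This also removes what you saw as the main obstacle. The decay rate $\eta$ is a continuous parameter, and $\sqrt{\eta}\,(1-\eta^M)/(1-\eta^{M+1})$ runs continuously from $0$ to $M/(M+1)$. So once $M/(M+1)>\rho=\sqrt{p^2+q^2}/L$, the intermediate value theorem hits $\rho$ exactly, with $s:t=p:q$. Your gadget is a legitimate alternative route to exact values, but only after a correct approximating configuration is in place.
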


\begin{proof}
If $p=q=0$, take $k=1$, $\alpha_1=\sqrt L$, and take $A,B,C$ to be three distinct singleton subsets of a three-element ordered set.  Thus assume $p^2+q^2>0$.

Set
\[
   \rho=\frac{\sqrt{p^2+q^2}}{L}\in(0,1).
\]
Choose an integer $M\ge1$ such that
\[
   \rho<\frac{M}{M+1}.
\]
For $0\le \eta<1$ define
\[
   f_M(\eta)=\sqrt{\eta}\,\frac{1-\eta^M}{1-\eta^{M+1}}.
\]
The function $f_M$ extends continuously to $\eta=1$, with $f_M(1)=M/(M+1)$.  Since $f_M(0)=0$ and $\rho<M/(M+1)$, the intermediate value theorem gives a number $\eta\in(0,1)$ such that
\begin{equation}\label{eq:fM-choice}
   \sqrt{\eta}\,\frac{1-\eta^M}{1-\eta^{M+1}}=\rho.
\end{equation}
Define
\[
   s=\frac{p}{\sqrt{p^2+q^2}}\sqrt{\eta},
   \qquad
   t=\frac{q}{\sqrt{p^2+q^2}}\sqrt{\eta}.
\]
Then $s,t\ge0$ and $s^2+t^2=\eta$.

Let $\calT_M$ be the set of all binary words of length at most $M$, including the empty word $\varnothing$, ordered first by length and then lexicographically with $0<1$.  We define coefficients indexed by $\calT_M$ recursively.  Put
\[
   \alpha_\varnothing=\lambda,
   \qquad
   \alpha_{w0}=s\alpha_w,
   \qquad
   \alpha_{w1}=t\alpha_w,
\]
where
\[
   \lambda^2=L\frac{1-\eta}{1-\eta^{M+1}}.
\]
For each level $j$,
\[
   \sum_{|w|=j}\alpha_w^2=\lambda^2(s^2+t^2)^j=\lambda^2\eta^j.
\]
Hence
\[
   \sum_{w\in\calT_M}\alpha_w^2
   =\lambda^2\sum_{j=0}^M\eta^j=L.
\]
Let $k=|\calT_M|=2^{M+1}-1$, and list the coefficients $\alpha_w$ in the order of $\calT_M$ to obtain $(\alpha_1,\ldots,\alpha_k)$.

It remains to define the ordered pattern.  Introduce symbols
\[
   z,
   \qquad
   a_w,b_w\quad(w\in\calT_M).
\]
Set
\[
   A=\{a_w:w\in\calT_M\},
   \qquad
   B=\{b_w:w\in\calT_M\},
\]
and
\[
   C=\{z\}\cup\{a_w:|w|<M\}\cup\{b_w:|w|<M\}.
\]
Thus $|A|=|B|=|C|=k$.  We now put a linear order on
\[
   \Omega=\{z\}\cup\{a_w,b_w:w\in\calT_M\}
\]
as follows.  Place $z$ first.  Then, for each non-empty word $u\in\calT_M$ in the order of $\calT_M$, place $a_w$ if $u=w0$ and place $b_w$ if $u=w1$.  Finally append the remaining symbols $a_w$ with $|w|=M$ in the order of $\calT_M$, followed by the remaining symbols $b_w$ with $|w|=M$ in the order of $\calT_M$.

With this order, the induced order on each of $A$, $B$ and $C$ is indexed by $\calT_M$ in the same level-lexicographic order.  The sets $A$ and $B$ are disjoint, so $\mu_\alpha(A,B)=0$.  The common elements of $A$ and $C$ are precisely the $a_w$ with $|w|<M$; in $C$, the element $a_w$ occupies the position of the child $w0$.  Therefore
\[
   \mu_\alpha(A,C)
   =\sum_{|w|<M}\alpha_w\alpha_{w0}
   =s\sum_{|w|<M}\alpha_w^2
   =s\lambda^2\sum_{j=0}^{M-1}\eta^j.
\]
Using the value of $\lambda$ and \eqref{eq:fM-choice}, this becomes
\[
   \frac{p}{\sqrt{p^2+q^2}}\sqrt{\eta}\,
   L\frac{1-\eta^M}{1-\eta^{M+1}}
   =\frac{p}{\sqrt{p^2+q^2}}\,L\rho=p.
\]
Similarly, the common elements of $B$ and $C$ are the $b_w$ with $|w|<M$, and in $C$ the element $b_w$ occupies the position of the child $w1$.  Hence
\[
   \mu_\alpha(B,C)
   =\sum_{|w|<M}\alpha_w\alpha_{w1}
   =t\sum_{|w|<M}\alpha_w^2=q.
\]
The lemma follows.
\end{proof}

\section{The angular condition}

The following elementary lemma translates the geometric angle condition into the disk inequality needed in Lemma~\ref{lem:tree}.

\begin{lemma}\label{lem:angle-disk}
Let $T$ be a non-degenerate triangle with side lengths $a,b,c$, where $c=\diam(T)$, and let $\gamma$ be the angle opposite the side of length $c$.  Put
\[
   L=\frac{c^2}{2},
   \qquad
   p=\frac{c^2-b^2}{2},
   \qquad
   q=\frac{c^2-a^2}{2}.
\]
Then
\[
   \gamma<135^\circ
   \quad\Longleftrightarrow\quad
   p^2+q^2<L^2.
\]
Moreover, $\gamma=135^\circ$ is equivalent to $p^2+q^2=L^2$, and $\gamma>135^\circ$ is equivalent to $p^2+q^2>L^2$.
\end{lemma}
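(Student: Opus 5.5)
We argue by direct algebra, using the law of cosines to express everything in terms of $\cos\gamma$. Since $c=\diam(T)$ is the longest side, $\gamma$ is the largest angle. By the law of cosines,
\[
   c^2=a^2+b^2-2ab\cos\gamma .
\]
Multiplying the desired inequality by $4$, the condition $p^2+q^2<L^2$ becomes
\[
   (c^2-b^2)^2+(c^2-a^2)^2<c^4 .
\]
So we work with the quantity
\[
   \Phi=c^4-(c^2-a^2)^2-(c^2-b^2)^2
\]
and determine its sign. Expanding gives
\[
   \Phi=-c^4+2c^2(a^2+b^2)-a^4-b^4 .
\]

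Next we substitute $u=a^2+b^2$ and $c^2=u-2ab\cos\gamma$, and use $a^4+b^4=u^2-2a^2b^2$:
\[
   \Phi=-(u-2ab\cos\gamma)^2+2u(u-2ab\cos\gamma)-u^2+2a^2b^2 .
\]
The $u^2$ terms cancel, and so do the $u\,ab\cos\gamma$ terms, since $+4uab\cos\gamma-4uab\cos\gamma=0$. What remains is
\[
   \Phi=2a^2b^2-4a^2b^2\cos^2\gamma=2a^2b^2(1-2\cos^2\gamma)=-2a^2b^2\cos 2\gamma .
\]
Verifying this identity is the only step that needs care, but it is routine.

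Since $T$ is non-degenerate, $a,b>0$, so the sign of $\Phi$ is the sign of $-\cos2\gamma$. The angle $\gamma$ is the largest angle of a triangle, so $\gamma\in[60^\circ,180^\circ)$ and hence $2\gamma\in[120^\circ,360^\circ)$. On this interval $\cos2\gamma$ is negative for $2\gamma<270^\circ$, zero at $2\gamma=270^\circ$, and positive for $2\gamma>270^\circ$. This gives the three equivalences at once:
\begin{itemize}[label={}]
\item $\gamma<135^\circ$ if and only if $p^2+q^2<L^2$;
\item $\gamma=135^\circ$ if and only if $p^2+q^2=L^2$;
\item $\gamma>135^\circ$ if and only if $p^2+q^2>L^2$.
\end{itemize}

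The main obstacle is only to make sure the reduction to the interval $[60^\circ,180^\circ)$ is justified, since the sign of $\cos2\gamma$ alone would also be negative for some acute angles below $45^\circ$. This follows because $c=\diam(T)$ is the longest side, so the opposite angle $\gamma$ is the largest angle and is at least $60^\circ$.
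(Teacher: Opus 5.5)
Your proof is correct; the identity
\[
4(L^2-p^2-q^2)=c^4-(c^2-a^2)^2-(c^2-b^2)^2=-2a^2b^2\cos2\gamma
\]
checks out. The paper does the same law-of-cosines algebra but organizes it differently. It normalizes to $x=p/L=1-b^2/c^2$ and $y=q/L=1-a^2/c^2$, writes $\cos\gamma=(1-x-y)/(2\sqrt{(1-x)(1-y)})$, and splits into two cases. If $x+y\le1$, it gets $\gamma\le90^\circ$ and $x^2+y^2<1$ directly; this is where $x,y\ge0$, i.e.\ $c$ being the longest side, is used. If $x+y>1$, both sides of $\cos\gamma\ge-1/\sqrt2$ have known signs, so it squares to reach $x^2+y^2\le1$.

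Your exact identity removes that case split and the sign bookkeeping needed for squaring, and gives all three equivalences at once. The longest-side hypothesis enters for you through $\gamma\ge60^\circ$ rather than through $x,y\ge0$. The paper's version has the mild advantage of staying in the quarter-disk coordinates $(p/L,q/L)$ that the binary-tree lemma uses. Yours has a different advantage: since $1-2\cos^2\gamma=2\sin^2\gamma-1$, the disk condition is exactly $\sin\gamma>1/\sqrt2$, i.e.\ $\crad(T)=c/(2\sin\gamma)<c/\sqrt2$. This shows directly that the $135^\circ$ threshold here is the Corsten--Frankl circumradius threshold.

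There is one small slip in your closing remark. For $\gamma<45^\circ$ one has $\cos2\gamma>0$, not $<0$, so without the restriction the identity would wrongly give $p^2+q^2>L^2$. Your actual point still stands: the restriction to $\gamma\ge60^\circ$ is needed, and it is justified because $c$ is the longest side.
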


\begin{proof}
Define
\[
   x=\frac{p}{L}=1-\frac{b^2}{c^2},
   \qquad
   y=\frac{q}{L}=1-\frac{a^2}{c^2}.
\]
Since $c$ is the largest side and $T$ is non-degenerate, we have $0\le x,y<1$.  By the cosine rule,
\begin{equation}\label{eq:cos-gamma}
   \cos\gamma
   =\frac{a^2+b^2-c^2}{2ab}
   =\frac{1-x-y}{2\sqrt{(1-x)(1-y)}}.
\end{equation}
If $x+y\le1$, then $\cos\gamma\ge0$, so $\gamma\le90^\circ<135^\circ$.  In this case $x^2+y^2<1$, because $x,y<1$ and $x+y\le1$.

It remains to consider $x+y>1$.  Then \eqref{eq:cos-gamma} is negative.  The inequality $\gamma\le135^\circ$ is equivalent to
\[
   \cos\gamma\ge -\frac1{\sqrt2},
\]
and, since both sides are non-positive after moving the sign, this is equivalent to
\[
   (x+y-1)^2\le 2(1-x)(1-y).
\]
Expanding and simplifying gives
\[
   x^2+y^2\le1.
\]
The strict and reversed inequalities give the strict cases.  Finally, $p^2+q^2<L^2$ is the same as $x^2+y^2<1$, and similarly for equality and the reversed inequality.
\end{proof}

\section{Proof of the main theorem}

\begin{proof}[Proof of Theorem~\ref{thm:main}]
Let $T$ have side lengths $a,b,c$, where $c=\diam(T)$, and let $\gamma$ be the angle opposite $c$.  By assumption, $\gamma<135^\circ$.  Define $L,p,q$ as in Lemma~\ref{lem:angle-disk}.  Then
\[
   p^2+q^2<L^2.
\]
By Lemma~\ref{lem:tree}, there exist $k$, non-negative coefficients $\alpha_1,\ldots,\alpha_k$ with
\[
   \sum_{j=1}^k\alpha_j^2=L,
\]
a finite linearly ordered set $\Omega$, and three $k$-subsets $A,B,C\in\binom{\Omega}{k}$ such that
\[
   \mu_\alpha(A,B)=0,
   \qquad
   \mu_\alpha(A,C)=p,
   \qquad
   \mu_\alpha(B,C)=q.
\]
Let $m=|\Omega|$.

Fix an integer $r\ge2$ and choose $n\ge R_k(m;r)$.  Form the weighted $k$-subset set
\[
   X_\alpha(n)=\{x_I:I\in\binom{[n]}{k}\}\subset\R^n
\]
as in Lemma~\ref{lem:model}.  Since the coefficients have squared sum $L$, Lemma~\ref{lem:model} gives
\[
   \diam(X_\alpha(n))=\sqrt{2L}=c=\diam(T),
\]
where we use $n\ge m\ge2k$.

Now color the points of $X_\alpha(n)$ with $r$ colors.  This induces an $r$-coloring of $\binom{[n]}{k}$ by giving $I$ the color of the point $x_I$.  This assignment is well-defined even if the map $I\mapsto x_I$ is not injective, because each point of $X_\alpha(n)$ has a single color.  By the definition of $R_k(m;r)$, there is an $m$-element set $S\subset[n]$ all of whose $k$-subsets have the same induced color.  Let
\[
   \varphi:\Omega\longrightarrow S
\]
be the order-preserving bijection.  Then the three $k$-subsets $\varphi(A)$, $\varphi(B)$ and $\varphi(C)$ have the same induced color, so the points
\[
   x_{\varphi(A)},\qquad x_{\varphi(B)},\qquad x_{\varphi(C)}
\]
are monochromatic in $X_\alpha(n)$.

By Lemma~\ref{lem:model} and the definition of the overlap form,
\[
   \langle x_{\varphi(A)},x_{\varphi(B)}\rangle=0,
   \qquad
   \langle x_{\varphi(A)},x_{\varphi(C)}\rangle=p,
   \qquad
   \langle x_{\varphi(B)},x_{\varphi(C)}\rangle=q.
\]
Since each of the three vectors has squared norm $L$, their pairwise squared distances are
\[
   \|x_{\varphi(A)}-x_{\varphi(B)}\|^2=2L=c^2,
\]
\[
   \|x_{\varphi(A)}-x_{\varphi(C)}\|^2=2(L-p)=b^2,
\]
and
\[
   \|x_{\varphi(B)}-x_{\varphi(C)}\|^2=2(L-q)=a^2.
\]
Thus these three monochromatic points form a triangle congruent to $T$.  Since the number of colors $r$ was arbitrary and the witness has diameter $\diam(T)$, the triangle $T$ is diameter-Ramsey.
\end{proof}

\section{Consequences and the endpoint}

Theorem~\ref{thm:main} and the theorem of Corsten and Frankl give the following classification for strict inequalities around $135^\circ$.

\begin{corollary}\label{cor:threshold}
Let $T$ be a non-degenerate triangle with largest angle $\gamma$.
\begin{enumerate}[label=\textup{(\roman*)}]
\item If $\gamma<135^\circ$, then $T$ is diameter-Ramsey.
\item If $\gamma>135^\circ$, then $T$ is not diameter-Ramsey.
\end{enumerate}
\end{corollary}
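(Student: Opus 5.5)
The corollary is obtained by combining two results, one for each part. For part (i), apply Theorem~\ref{thm:main} directly. The hypothesis there concerns the largest angle of $T$, which is exactly $\gamma$ here, so there is nothing further to check.

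For part (ii), I will invoke the theorem of Corsten and Frankl \cite{CorstenFrankl2018}: every finite set of diameter $1$ with circumradius strictly larger than $1/\sqrt2$ fails to be diameter-Ramsey. The plan has three steps.

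\textbf{Step 1 (scaling).} Being diameter-Ramsey is invariant under scaling: if $R$ witnesses the property for $P$, then $\lambda R$ witnesses it for $\lambda P$. So I may normalize $\diam(T)=c=1$, where $c$ is the side opposite $\gamma$. Since $\gamma>135^\circ$ exceeds $90^\circ$, it is the unique largest angle, and its opposite side is indeed the diameter.

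\textbf{Step 2 (circumradius).} By the law of sines, the circumradius is $\crad(T)=c/(2\sin\gamma)=1/(2\sin\gamma)$. For $\gamma\in(135^\circ,180^\circ)$ we have $0<\sin\gamma<\sin 135^\circ=1/\sqrt2$. Hence $\crad(T)>1/\sqrt2$.

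\textbf{Step 3 (conclusion).} The cited theorem now applies and shows that $T$ is not diameter-Ramsey.

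Alternatively, part (ii) can be read off directly from the explicit statement recalled in the introduction: every triangle with an angle larger than $135^\circ$ is not diameter-Ramsey.

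There is essentially no obstacle here. The only step needing care is the translation in Step 2 from the angle condition to the circumradius condition, which is the elementary sine-law computation above. The non-degeneracy of $T$ ensures $\sin\gamma>0$, so the circumradius is finite and the computation is valid.
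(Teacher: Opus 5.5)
Your proposal is correct and matches the paper's proof: part (i) is Theorem~\ref{thm:main}, and part (ii) uses the law of sines to turn $\gamma>135^\circ$ into $\crad(T)=c/(2\sin\gamma)>c/\sqrt2$, then scales to diameter $1$ and applies Corsten and Frankl's obstruction. You also spell out two points the paper leaves implicit: that being diameter-Ramsey is invariant under scaling, and that the side opposite $\gamma$ is the diameter.
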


\begin{proof}
Part \textup{(i)} is Theorem~\ref{thm:main}.  For part \textup{(ii)}, let $c=\diam(T)$.  The circumradius of $T$ is
\[
   \crad(T)=\frac{c}{2\sin\gamma}.
\]
If $\gamma>135^\circ$, then $\sin\gamma<1/\sqrt2$, and therefore $\crad(T)>c/\sqrt2$.  After scaling $T$ to diameter $1$, Corsten and Frankl's obstruction theorem \cite{CorstenFrankl2018} implies that $T$ is not diameter-Ramsey.
\end{proof}

\begin{corollary}\label{cor:obtuse-and-conjecture}
There exist obtuse diameter-Ramsey triangles.  More precisely, every triangle whose largest angle lies in $(90^\circ,135^\circ)$ is diameter-Ramsey.  Consequently, the simplex conjecture of Corsten and Frankl \cite[Conjecture 3.1]{CorstenFrankl2018} is false.
\end{corollary}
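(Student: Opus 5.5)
The corollary has three parts, and I would prove them in the order stated.

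For existence, take a concrete obtuse triangle with largest angle strictly between $90^\circ$ and $135^\circ$, say an isosceles triangle with apex angle $120^\circ$. Theorem~\ref{thm:main} applies to it directly. For the ``more precisely'' clause, let $T$ have largest angle $\gamma\in(90^\circ,135^\circ)$. Then $\gamma<135^\circ$, so Theorem~\ref{thm:main}, or equivalently part (i) of Corollary~\ref{cor:threshold}, shows that $T$ is diameter-Ramsey. Since $\gamma>90^\circ$, $T$ is obtuse, so in particular obtuse diameter-Ramsey triangles exist.

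For the refutation, I would recall the conjecture of Corsten and Frankl: a simplex is diameter-Ramsey if and only if its circumcenter lies in its convex hull. A non-degenerate triangle is a $2$-simplex, so it suffices to show that every obtuse triangle has its circumcenter outside the closed triangle. This gives a diameter-Ramsey simplex violating the ``only if'' direction.

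To place the circumcenter, let the vertices be $A,B,C$ with the obtuse angle $\gamma$ at $C$, and let $O$ be the circumcenter. By the inscribed angle theorem, the arc $AB$ not containing $C$ subtends the central angle $2\gamma>180^\circ$. Hence $C$ lies on the minor arc $AB$, while $O$ lies strictly on the opposite side of line $AB$ from $C$.

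An alternative is the cosine rule. It gives $\langle A-C,B-C\rangle<0$. Writing $O$ in barycentric coordinates, its weight on $C$ is proportional to $\sin 2\gamma<0$, since $2\gamma\in(180^\circ,270^\circ)$. Either way, $O$ is not in the convex hull of $\{A,B,C\}$.

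This last step, confirming that the circumcenter lies strictly outside, is the only real point to check, and it is standard. I would use the barycentric formula $(\sin2\alpha:\sin2\beta:\sin2\gamma)$, because the sign of the coefficient on $C$ is then immediate.
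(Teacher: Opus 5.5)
Your proposal is correct and follows the paper's proof: you apply Theorem~\ref{thm:main} to a triangle with largest angle in $(90^\circ,135^\circ)$, and you use that an obtuse triangle has its circumcenter outside its convex hull to contradict the ``only if'' direction of the Corsten--Frankl conjecture. The paper simply takes the circumcenter fact as standard, while you prove it; your inscribed-angle argument is complete on its own, and in the barycentric version you should add that the normalizing sum $\sin2\alpha+\sin2\beta+\sin2\gamma=4\sin\alpha\sin\beta\sin\gamma$ is positive, so the $C$-weight really has the sign of $\sin2\gamma$.
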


\begin{proof}
The first assertion is immediate from Theorem~\ref{thm:main}.  For the second, take any triangle with largest angle in $(90^\circ,135^\circ)$.  Its circumcenter lies outside its convex hull, as for every obtuse triangle, but it is diameter-Ramsey by Theorem~\ref{thm:main}.  This contradicts the proposed characterization of diameter-Ramsey simplices by containment of the circumcenter in the convex hull.
\end{proof}

\begin{remark}[Endpoint]
The exact case $\gamma=135^\circ$ is not settled here.  In the notation of Lemma~\ref{lem:angle-disk}, it corresponds to the boundary $p^2+q^2=L^2$.  The finite binary-tree representation in Lemma~\ref{lem:tree} uses the strict inequality $p^2+q^2<L^2$; at the boundary the finite construction degenerates into a limiting infinite self-similar representation.  Corsten and Frankl's obstruction is also strict, since it assumes circumradius larger than $\diam/\sqrt2$.  Thus the endpoint remains a separate boundary problem.
\end{remark}

\section*{Acknowledgements}

The author is supported by the National Science Foundation of China (Nos.\ 12471329 and 12061059).

\section*{Declaration of competing interest}

The author declares no competing interests.

\section*{Data availability}

No data were used in this work.

\end{document}